\newtheorem{theorem}{Theorem}[section]
\newtheorem{proposition}[theorem]{Proposition}
\newtheorem{lemma}[theorem]{Lemma}
\newtheorem{corollary}[theorem]{Corollary}
\newtheorem{question}[theorem]{Question}
\newtheorem{example}[theorem]{Example}
\theoremstyle{definition}
\newtheorem{definition}[theorem]{Definition}
\numberwithin{equation}{section}
\numberwithin{equation}{section}
\let\save@mathaccent\mathaccent
\newcommand*\if@single[3]{%
\setbox0\hbox{${\mathaccent"0362{#1}}^H$}%
\setbox2\hbox{${\mathaccent"0362{\kern0pt#1}}^H$}%
\ifdim\ht0=\ht2 #3\else #2\fi
}
\newcommand*{\transpose}{%
{\mathpalette\@transpose{}}%
}
\newcommand*{\@transpose}[2]{%
\raisebox{\depth}{$\m@th#1\intercal$}%
}
\def\pgfutil@Repeat#1#2{#2\ifnum#1>0
  \expandafter\pgfutil@firstofone\else\expandafter\pgfutil@gobble\fi
  {\expandafter\pgfutil@Repeat\expandafter{\the\numexpr#1-1\relax}{#2}}}
\tikzset{
  dash between/.code args={#1 and #2}{%
    \tikz@addoption{%
      \pgfgetpath\currentpath
      \pgfprocessround{\currentpath}{\currentpath}%
      \pgf@decorate@parsesoftpath{\currentpath}{\currentpath}%
      \pgfmathsetlengthmacro\firstpart{(#1)*\pgf@decorate@totalpathlength}%
      \pgfmathsetlengthmacro\secondpart{(#2-(#1))*\pgf@decorate@totalpathlength}%
      \pgfmathsetlengthmacro\thirdpart{(1-(#2))*\pgf@decorate@totalpathlength}%
      \edef\thirdpart{{\thirdpart}{0pt}}%
      \edef\firstpart{{\firstpart}{0pt}}%
      \pgfmathsetlengthmacro\secondpartlength{\pgfkeysvalueof{/tikz/dash between on}
                                            +(\pgfkeysvalueof{/tikz/dash between off})}%
      \pgfmathtruncatemacro\repetitions{\secondpart/\secondpartlength}%
      \pgfmathsetlengthmacro\secondexpand{\secondpart/\repetitions-\secondpartlength}%
      \edef\secondexpand{\the\dimexpr\pgfkeysvalueof{/tikz/dash between off}+\secondexpand\relax}%
      \edef\secondpart{%
        \pgfutil@Repeat{\the\numexpr\repetitions-1\relax}%
          {{\pgfkeysvalueof{/tikz/dash between on}}{\secondexpand}}%
      }%
      \edef\tikz@temp{\firstpart\secondpart\thirdpart}%
      \expandafter\pgfsetdash\expandafter{\tikz@temp}{+0pt}%
    }
  }
}
\tikzset{
  dash between style/.is choice,
  dash between style/dotted/.style        ={dash between on=\pgflinewidth,dash between off=2pt},
  dash between style/densely dotted/.style={dash between on=\pgflinewidth,dash between off=1pt},
  dash between style/loosely dotted/.style={dash between on=\pgflinewidth,dash between off=4pt},
  dash between style/dashed/.style        ={dash between on=3pt,dash between off=2pt},
  dash between style/loosely dashed/.style={dash between on=3pt,dash between off=6pt},
  dash between style/densely dashed/.style={dash between on=3pt,dash between off=2pt},
  dash between style/no/.style={dash between on=0pt, dash between off=1pt},
  dash between on/.initial=\pgflinewidth,
  dash between off/.initial=2pt,
  middle dotted line/.style={
    thick,
    dash between=.35 and .65}}
\newcommand\QQ{\mathbb{Q}}
\newcommand\CC{\mathbb{C}}
\newcommand\RR{\mathbb{R}}
\newcommand\ZZ{\mathbb{Z}}
\DeclareMathOperator{\et}{\acute{e}t}
\DeclareMathOperator{\pt}{pt}
\DeclareMathOperator{\Gal}{Gal}
\DeclareMathOperator{\Spec}{Spec}
\newcommand{\legendre}[2]{\ensuremath{\left( \frac{#1}{#2} \right) }}
\title{Galois symmetry of $\Gal(\overline{\QQ}/\QQ)$ on topological manifold structures of varieties}
\author{Runjie Hu}
\newcommand{\Addresses}{{
  \bigskip
  \footnotesize

  Runjie Hu, \textsc{Department of Mathematics, Texas A\&M University, College Sta, TX 77843}\par\nopagebreak
  \textit{E-mail address}, \texttt{ runjie.hu@tamu.edu}

}}
\date{}
\begin{document}

\maketitle

\begin{abstract}
We propose a definition of the profinite normal structure set for the set of all manifolds in a fixed profinite homotopy type. Using this framework, we prove that the Galois action of $\Gal(\overline{\QQ}/\QQ)$ on the underlying topological manifold structures of smooth, complete, simply-connected complex varieties defined over $\overline{\QQ}$ of dimension at least $3$ factors through the abelianization of $\Gal(\overline{\QQ}/\QQ)$. Moreover, this abelian action extends canonically to the entire profinite normal structure set. This result provides an answer to the question by Sullivan in the case of topological manifold structures of simply-connected varieties.
\end{abstract}

\section{Introduction}

For any variety defined over the algebraic closure $\overline{\QQ}$ of $\QQ$, a Galois conjugation of $\overline{\QQ}$ not only produces a Galois conjugate variety by conjugating the defining polynomials, but also induces a canonical map between the original variety and its Galois conjugate by conjugating $\overline{\QQ}$-points. One may extend this Galois automorphism of $\overline{\QQ}$ to $\CC$. The extending map on the complex points of the varieties is usually discontinuous. 

However, this ``discontinous'' map becomes a homotopy equivalence after profinite completion of the varieties (\cite{Artin&Mazur1969}). Recall that the profinite completion of a space captures the finite information of the underlying homotopy type. Consequently, the finite information (known as the profinite completion) of the fundamental group is Galois invariant. However, topological invariants that rely solely on the transcendental topology of complex numbers are generally not Galois invariant. For instance, the entire fundamental group of a variety is not Galois invariant (\cite{Serre1964}). Thus the underlying profinite homotopy type of a variety is algebraic rather than transcendental.

The Galois conjugating map between the variety and its Galois conjugate aligns with the concept of the structure set of a manifold. Recall that the structure set of a manifold consists of homotopy classes of all homotopy equivalences from some other manifold into this manifold, modulo homeomorphisms on the domains. Intuitively the structure set represents the set of all topological manifolds in a homotopy type. It is necessary to formalize the finite information of a structure set to incorporate Galois conjugating maps of varieties and study the Galois action of $\Gal(\overline{\QQ}/\QQ)$ on the underlying manifold structures of varieties.  

The naive approach, defining the profinite structure set as the set of homotopy classes of all profinite homotopy equivalences from some other manifold to a fixed manifold, modulo homeomorphisms on the domains, does not inherently carry a Galois symmetry. To overcome this technical difficulty, we introduce the \textbf{profinite normal structure set} (see Definition \ref{definition-of-profinite-structure-set}), defined as the set of different profinite liftings of the Spivak normal spherical fibration to topological $\RR^n$-bundles.

Within the profinite normal structure set, there is a subset consisting of topological manifolds that are represented by complex varieties defined over $\overline{\QQ}$. These elements are referred to as \textbf{$\overline{\QQ}$-algebraic elements} (see the paragraph above Definition \ref{definition-of-Q-bar-algebraic-elements}). The Galois conjugating maps of varieties induce a Galois action of $\Gal(\overline{\QQ}/\QQ)$ on the $\overline{\QQ}$-algebraic elements (see Definition \ref{definition-of-Q-bar-algebraic-elements}). This action corresponds to the Galois action of $\Gal(\overline{\QQ}/\QQ)$ on the underlying manifold structures of varieties.

A vague statement of our theorem is the following (for a more rigorous statement, see Theorem \ref{GaloisMain}).

\begin{theorem}\label{Main-Introduction}
For a simply-connected, compact, smooth complex variety $X$ defined over $\overline{\QQ}$ of dimension at least $3$, the Galois action of $\Gal(\overline{\QQ}/\QQ)$ on the underlying topological manifold structures of the $\overline{\QQ}$-varieties which are profinite homotopy equivalent to $X$ (i.e., the $\overline{\QQ}$-algebraic elements) factors through the abelianization $\widehat{\ZZ}^{\times}$ of $\Gal(\overline{\QQ}/\QQ)$. Moreover, this abelian action of $\widehat{\ZZ}^{\times}$ canonically extends to the entire profinite normal structure set of $X$.
\end{theorem}

Our result answers the question by Sullivan in the case of underlying topological manifold structure of simply-connected varieties of higher dimensions.

\begin{question}[\cite{SullivanMITnotes}*{p.~271, Problem 4}]
Analyze the action of $\Gal(\overline{\QQ}/\QQ)$ on the manifold structures in a profinite homotopy type associated to nonsingular
algebraic varieties defined over $\overline{\QQ}$.
\end{question}

The proof of Theorem \ref{Main-Introduction} relies on a successful application of algebraic geometry to the Adams conjecture in topology (\cite{Quillen1968}\cite{Friedlander-thesis}\cite{Sullivan1974}). The Adams conjecture states that the underlying profinite homotopy sphere bundle of a vector bundle remains invariant under Adams operations modified in a certain way. A key point of the proof in \cite{Sullivan1974} lies in the construction of unstable Adams operations on finite Grassmannians. These unstable Adams operations arise naturally from the Galois action $\Gal(\overline{\QQ}/\QQ)$ on the Grassmannians, as Grassmannians are algebraic varieties defined over $\QQ$.

We first construct an abelianized Galois action of $\widehat{\ZZ}^{\times}$ on the profinite normal structure set of a variety (see Definition \ref{Definition-of-abelian-Galois-symmetry}). At odd primes, the action is induced by Adams operations on the tangent bundles of manifolds; at prime $2$, additional efforts are required (see Definition \ref{definition-of-Galois-k-classes} and the discussions above it). The proof of Theorem \ref{Main-Introduction} reduces to proving the compatibility between the Galois action of $\Gal(\overline{\QQ}/\QQ)$ on a variety and the abelianized Galois action of $\widehat{\ZZ}^{\times}$ on the profinite normal structure set of a variety, via the abelianization map $\Gal(\overline{\QQ}/\QQ)\rightarrow \widehat{\ZZ}^{\times}$. 

The most challenging part in proving the compatibility is to handle the prime $2$ information of the profinite normal structure set. To overcome this technical difficulty, we use the sophisticated results in \cite{Brumfiel&Morgan}\cite{Sullivan&Morgan}\cite{Madsen&Milgram1974} about the $2$-local characteristic classes of homotopy sphere bundles and $TOP$ bundles.

Our result shows the algebraic aspect of Galois symmetry on manifold structures, but the geometric aspect still remains mysterious (see comments and discussions in \cite{SullivanMITnotes}*{p.~271-272} and \cite{Grothendieck-dessins-denfants}). The discussions about dessins d'enfants (see \cite{Survey-Absolute-Galois-Action}\cite{Lochak-Schneps}\cite{Ihara-Matsumoto}) suggest a potential pathway to understanding the geometric aspect of Galois symmetry. In future works, we will provide a geometric and combinatorial explanation to explain the Galois symmetry on the manifold structures of varieties.

\subsubsection*{Acknowledgements.} This research is conducted during my PhD studies at Stony Brook University and my postdoctoral studies at Texas A\&M University. It is partially supported by the Simons Foundation International and NSF Grant 1952693. I would like to express my gratitude to my thesis advisor, Dennis Sullivan, for introducing me to this problem. I also want to thank Mark de Cataldo, James F. Davis, Jiahao Hu, John Morgan, James Myer, John Pardon, Jason Starr, Guozhen Wang, Shmuel Weinberger, Zhouli Xu and Siqing Zhang for many useful discussions and valuable suggestions.

\section{profinite normal structure set and abelianized Galois symmetry}

Let $X$ be a simply-connected closed topological manifold of dimension at least $5$. Recall that the structure set $S(X)$ (e.g., \cite{Weinberger1995StratifiedSpaces}*{p.~57}\cite{davis}*{Definition 4.1}) is the set of homotopy classes of all homotopy equivalences $M\rightarrow X$ with $M$ a closed topological manifold modulo the following equivalence relation. $M\rightarrow X$ and $M'\rightarrow X$ are equivalent if there exists a homeomorphism $M\rightarrow M'$ such that the following diagram commutes up to homotopy.
\[
\begin{tikzcd}
    M \arrow[r] \arrow[d] & X \\
    M' \arrow[ru] &
\end{tikzcd}
\]

A naive idea to profinite complete the structure set $S(X)$ is the set $S(X)^{\wedge}_G$ of homotopy classes of all homotopy equivalences of profinite completions $M^{\wedge}\rightarrow X^{\wedge}$ with $M$ a closed topological manifold modulo the following equivalence relation. $M^{\wedge}\rightarrow X^{\wedge}$ and $(M')^{\wedge}\rightarrow X^{\wedge}$ are equivalent if there exists a homeomorphism $M\rightarrow M'$ such that the following diagram commutes up to homotopy.
\[
\begin{tikzcd}
    M^{\wedge} \arrow[r] \arrow[d] & X^{\wedge} \\
    (M')^{\wedge} \arrow[ru] &
\end{tikzcd}
\]

However, it is hard to construct the abelianized Galois action $\widehat{\ZZ}^{\times}$ on $S(X)^{\wedge}_G$. So we use the following indirect way to define the profinite structure set.

Note that any homotopy equivalence $f:M^{\wedge}\rightarrow X^{\wedge}$ provides a fiberwise homotopy equivalence of profinite spherical fibrations $(f^{-1})^*\nu^{\wedge}_M\rightarrow \nu_X$ over $X$, where $\nu_M$ and $\nu_X$ are stable normal bundles of $M$ and $X$ respectively. This corresponds to an element in $[X,(G/TOP)^{\wedge}]$, where $(G/TOP)^{\wedge}$ is the profinite completion of the $TOP$ surgery space $G/TOP$.

\begin{definition}\label{definition-of-profinite-structure-set}
Let $X$ be a simply-connected closed topological manifold of dimension at least $5$. The \textbf{profinite normal structure set} $S(X)^{\wedge}_N$ is defined to be $[X,(G/TOP)^{\wedge}]$.
\end{definition}

$(G/TOP)^{\wedge}$ is homotopy equivalent to the product of $p$-completions $(G/TOP)^{\wedge}_p$ of $G/TOP$ for all primes $p$. Recall that $(G/TOP)^{\wedge}_{2}\simeq \prod_{k>0}(K(\ZZ^{\wedge}_{2},4k)\times K(\ZZ/2,4k-2))$ and $(G/TOP)^{\wedge}_{p}\simeq BSO^{\wedge}_{p}$ when $p$ is odd (\cite{Sullivan1996}*{p.~85, Theorem 4},\cite{KirbySiebenmann}*{p.~329, 15.3}). This directly deduce the following.

\begin{proposition}
$S(X)^{\wedge}_N$ is bijective to the set of $(\{\phi_p\}_{\text{(odd $p$)}},l,k)$, where $\{\phi_p\}_{\text{(odd $p$)}}\in \prod_{\text{(odd $p$)}} \widetilde{KO_p^\wedge}(X)$, $l\in H^{4*}(X;\ZZ^{\wedge}_{2})$ and $k\in H^{4*-2}(X;\ZZ/2)$.   
\end{proposition}

Recall from \cite{Brumfiel&Morgan}*{Theorem E and p.~9}\cite{SullivanMITnotes}*{Theorem 6.5}\cite{Sullivan&Morgan}*{p.~530} that the stable normal bundle $\nu_X$ of $X$ has a canonical Thom class $(\Delta_X)_p\in\widetilde{KO_p^\wedge}(\widehat{M(\nu_X)}_p)$ for each odd $p$ and a characteristic class $L_X\in H^{4*}(X;\widehat{\ZZ}_2)$.

\begin{definition}\label{Definition-of-abelian-Galois-symmetry}
The \textbf{abelianized Galois symmetry $\widehat{\ZZ}^\times$ on $\mathbf{S}(X)^\wedge_N$} is defined by 
\begin{equation}\label{odd-prime-Galois-symmetry}
\phi_p \rightarrow \psi^{\sigma_p}\phi_p\cdot \frac{\psi^{\sigma_p}(\Delta_X)_p}{(\Delta_X)_p}  \end{equation}   
\begin{equation}\label{2-Galois-symmetry-l-class}
(1+8\cdot\sigma_2(l)) \cdot L_X=(1+8\cdot\psi^{\sigma_2}_Hl)\cdot \psi^{\sigma_2}_HL_X  
\end{equation}
\begin{equation}\label{2-Galois-symmetry-k-class}
\sigma_2(k)=k+k^{\sigma_2}_X
\end{equation}
where $\phi_p\in \widetilde{KO_p^\wedge}$, $l\in H^{4*}(X;\ZZ^{\wedge}_{2})$, $k\in H^{4*-2}(X;\ZZ/2)$, $(\sigma_p)_p\in \prod_p \widehat{\ZZ}_p^\times\cong \widehat{\ZZ}^\times$, $\psi^{\sigma_p}$ is the Adams operation, $\psi^{\sigma_2}_H$ is the cohomological Adams operation on $H^{2*}(X;\widehat{\ZZ}_2)$ and $k^{\sigma_2}_X$ is constructed below.   
\end{definition}

Recall from \cite{Sullivan1974}\cite{Madsen&Milgram}*{p.~106} that the map $BU^{\wedge}_2\xrightarrow{\psi^{\sigma_2}-1} BU^{\wedge}_2$ canonically factors as $BU^{\wedge}_2\rightarrow (G/U)^{\wedge}_2\rightarrow BU^{\wedge}_2$. Then there is a composition of maps $f_{\sigma_2}:BU^{\wedge}_2\rightarrow (G/U)^{\wedge}_2\rightarrow (G/TOP)^{\wedge}_2$. Let $k^q\in H^{4*+2}((G/TOP)^{\wedge}_2;\ZZ/2)$ be the Kervaire characteristic class (\cite{Sullivan&Rourke}\cite{Sullivan1996}*{p.~88, Corollary 1}).
Let $\widetilde{k}^{\sigma_2}=f^*_{\sigma_2}k^q$.

Since $k^{\sigma_2}$ is a combination of Stiefel-Whitney classes, it can be lifted to the same combination of Stiefel-Whitney classes of spherical fibrations, i.e., $\widetilde{k}^{\sigma_2}$ actually lies in $H^{4*+2}(BG_2;\ZZ/2)$, where $BG_2$ is the classifying space of $2$-profinite spherical fibrations (\cite{SullivanMITnotes}*{Theorem 4.2}). Let $k^{\sigma_2}$ be this characteristic class in $H^{4*+2}(BG_2;\ZZ/2)$.

\begin{definition}\label{definition-of-Galois-k-classes}
Define $k^{\sigma_2}_X=(\nu_X)^* k^{\sigma_2}\in H^{4*+2}(X;\ZZ/2)$, where $\nu_X:X\rightarrow BG$ is the underlying spherical fibration of the stable normal bundle of $X$.    
\end{definition}

In the proof of our main theorems \ref{GaloisMain}\ref{GaloisMainIII}, we need the following lemma. Its proof is postposted to Section \ref{algorithm-for-k-class}. In Section \ref{algorithm-for-k-class} we also compute the characteristic classes $k^{\sigma_2}$.

\begin{lemma}(Additivity of $k^{\sigma_2}$)\label{Additivity-of-k-classes}\\
Let $\Delta: BU^{\wedge}_2\times BU^{\wedge}_2\rightarrow BU^{\wedge}_2$ be the $H$-space product induced by the Whitney sum of vector bundles. Then $\Delta^* k^{\sigma_2}=k^{\sigma_2}\times 1+ 1 \times k^{\sigma_2}$.
\end{lemma}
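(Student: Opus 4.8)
The plan is to derive the additivity of $k^{\sigma_2}$ from two facts: that $f_{\sigma_2}$ is an $H$-map for the Whitney-sum structures, and that the Kervaire class $k^q$ is primitive on $(G/TOP)^\wedge_2$. Write $\Delta$ also for the Whitney-sum multiplications on $(G/U)^\wedge_2$ and $(G/TOP)^\wedge_2$. The desired identity $\Delta^*k^{\sigma_2}=k^{\sigma_2}\times 1+1\times k^{\sigma_2}$ says precisely that $k^{\sigma_2}=f^*_{\sigma_2}k^q$ is primitive in the coalgebra $H^*(BU^\wedge_2;\ZZ/2)$ whose coproduct is dual to $\Delta$; since pullback along a coalgebra map preserves primitives, it suffices to realize $f^*_{\sigma_2}$ as a coalgebra map and to know $k^q$ is primitive.

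First I would verify that $f_{\sigma_2}$ is an $H$-map. The Adams operation $\psi^{\sigma_2}$ is additive for the Whitney sum, so $\psi^{\sigma_2}\circ\Delta\simeq\Delta\circ(\psi^{\sigma_2}\times\psi^{\sigma_2})$, and because $BU^\wedge_2$ is a homotopy-commutative, homotopy-associative $H$-space the homotopy difference $\psi^{\sigma_2}-1$ is again an $H$-map. Its Adams-conjecture factorization through $(G/U)^\wedge_2$ can be taken to be an $H$-map using the infinite-loop (or at least $H$-space) refinement of the Adams conjecture, since the nullhomotopy of $BU^\wedge_2\to BU^\wedge_2\to BG^\wedge_2$ respects the additive structure; the forgetful map $(G/U)^\wedge_2\to(G/TOP)^\wedge_2$ induced by $U\to TOP$ is visibly an $H$-map. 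Hence $f_{\sigma_2}$ is an $H$-map and $\Delta^*f^*_{\sigma_2}=(f_{\sigma_2}\times f_{\sigma_2})^*\Delta^*$.

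It then remains to show that $\Delta^*k^q=k^q\times 1+1\times k^q$ on $(G/TOP)^\wedge_2$, for pulling this back along $f_{\sigma_2}\times f_{\sigma_2}$ yields the lemma. Here I would work inside $H^*((G/TOP)^\wedge_2;\ZZ/2)$ using the Sullivan splitting. A degree count restricts the possible non-primitive correction terms of $k^q$ in degree $4n+2$ to sums of external products of positive-degree classes built from the $\ell_{4i}$, the Kervaire classes $\kappa_{4j+2}$, and their Steenrod operations. The task is to show all such corrections are absent, i.e. that $k^q$ is the pullback of the fundamental class under an $H$-map projection onto the Kervaire Eilenberg--MacLane factor $K(\ZZ/2,4n+2)$; primitivity would then be inherited from the manifestly primitive fundamental class.

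The main obstacle is exactly this last step. The Sullivan splitting of $(G/TOP)^\wedge_2$ is not a priori a splitting of $H$-spaces, and the $L$-classes $\ell_{4i}$ are genuinely non-primitive (the total $L$-class is multiplicative under Whitney sum), so one cannot argue formally. Moreover the potential cross terms do not die under $f^*_{\sigma_2}$: the class $f^*_{\sigma_2}\ell_{4i}$ is the mod $2$ reduction of the degree-$4i$ component of the $L$-genus attached to $\psi^{\sigma_2}-1$, which is nonzero. Thus the crux is an honest computation of the coproduct of the Kervaire class on $(G/TOP)^\wedge_2$ --- equivalently, that the Kervaire projection is an $H$-map --- which I expect to extract from the Hopf-algebra structure of $H^*(G/TOP;\ZZ/2)$ (Sullivan--Rourke, Madsen--Milgram) together with the additivity of the Kervaire--Arf surgery obstruction, rather than from any formal property of $f_{\sigma_2}$.
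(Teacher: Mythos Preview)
Your high-level strategy coincides with the paper's: both reduce the additivity of $k^{\sigma_2}$ to the single claim that $f_\sigma:BU^\wedge\to (G/U)^\wedge$ is an $H$-map (the paper literally writes ``it suffices that the map $f_\sigma$ \ldots\ is an $H$-space map''), implicitly relying on the primitivity of the Kervaire class $k^q$ on $(G/TOP)^\wedge_2$ to finish. You are in fact more careful than the paper on this last point: the paper silently absorbs the primitivity of $k^q$ into its ``it suffices'', whereas you isolate it as a separate ingredient and correctly point to Rourke--Sullivan and Madsen--Milgram (equivalently, additivity of the Kervaire--Arf obstruction) for its justification. Your worry about $L$-class cross terms is legitimate in principle but is resolved exactly by that primitivity, not by anything special about $f_{\sigma_2}$.

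Where the two arguments genuinely diverge is in \emph{how} the $H$-map property of $f_\sigma$ is established. You invoke the infinite-loop refinement of the Adams conjecture to upgrade the nullhomotopy of $(\psi^{\sigma_2}-1)$ in $BG^\wedge_2$ to a structured one, so that the resulting lift to $(G/U)^\wedge_2$ is automatically an $H$-map. The paper instead gives a hands-on geometric proof in Sullivan's original style: it realizes $\psi^\sigma$ unstably as the Galois self-map $\alpha$ on $Gr_n(\CC^N)^\wedge$, uses that the Whitney sum $BU(n)^\wedge\times BU(m)^\wedge\to BU(n+m)^\wedge$ is Galois-equivariant, and then checks directly that the fiberwise join of the universal sphere bundles $BU(n-1)\to BU(n)$ and $BU(m-1)\to BU(m)$ is carried compatibly, via an explicit diagram chase through $BU(n-1)\times BU(m-1)\times\CC P^1$. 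This buys self-containment---no appeal to the (harder) infinite-loop Adams conjecture---at the cost of a longer computation; your route is shorter but imports heavier external machinery than the rest of the paper uses.
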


\section{Main Theorems}

Let $X,Y$ be smooth complete complex varieties. Let $X_{\et},Y_{\et}$ be their \'etale homotopy types (technically, we take the homotopy inverse limit of the pro-spaces representing their \'etale homotopy types). By \cite[Corollary 12.10]{Artin&Mazur1969}, $X_{\et}$ is homotopy equivalent to the profinite completions $X^{\wedge}$ of $X$. The same is true for $Y_{\et}$.

Assume that $Y$ is simply-connected. Let $f:X\rightarrow Y$ be an algebraic map over some field automorphism of $\CC$ such that $f_{\et}:X_{\et}\rightarrow Y_{\et}$ is a homotopy equivalence. 

Then $f_{\et}$ represents an element in the profinite normal structure set $S(Y)^{\wedge}_N$. Call such an element \textbf{a complex algebraic element in $\mathbf{S}(Y)^\wedge_N$}.

Recall that any $\sigma\in \Gal(\CC/\QQ)$ induces an algebraic map $\sigma:X^{\sigma}\rightarrow X$, which conjugates the $\CC$-points. By \cite[Corollary 12.11]{Artin&Mazur1969}, $\sigma:X^{\sigma}\rightarrow X$ induces a homotopy equivalence $\sigma:X^{\sigma}_{\et}\rightarrow X_{\et}$.

\begin{definition}
The \textbf{Galois action of $\Gal(\CC/\QQ)$ on the complex algebraic elements in $\mathbf{S}^{TOP}(Y)^\wedge$ (or $\mathbf{S}^{TOP}(Y)^\wedge_p$)} is defined by
\[
\sigma:(X\xrightarrow{f} Y) \rightarrow ((X^{\sigma})\xrightarrow{\sigma} X\xrightarrow{f} Y)
\]
where $\sigma\in \Gal(\CC/\QQ)$.   
\end{definition}

Let $\omega:\Gal(\CC/\QQ)\rightarrow \widehat{\ZZ}^{\times}$ be the restriction of the field automorphisms of $\CC$ on the roots of unity.

\begin{theorem}\label{GaloisMainIII}
Let $Y$ be a smooth, complete, connected, simply-connected complex variety. Assume that $Y$ has complex dimension at least $3$. Then the Galois action $\Gal(\CC/\QQ)$ on the complex algebraic elements in $\mathbf{S}(Y)^\wedge_N$ factors through the homomorphism $\omega:\Gal(\CC/\QQ)\rightarrow \widehat{\ZZ}^{\times}$ given by restriction on roots of unity. Moreover, this abelian action canonically extends to the entire $\mathbf{S}(Y)^\wedge_N$.
\end{theorem}

We prove this theorem by showing the following lemma.

\begin{lemma}\label{GaloisMainII}
With the same assumption as in Theorem \ref{GaloisMainIII}, the Galois action $\Gal(\CC/\QQ)$ on the complex algebraic elements of $\mathbf{S}(Y)^\wedge_N$ agrees with the abelianized Galois action of $\widehat{\ZZ}^\times$ on $\mathbf{S}(Y)^\wedge_N$ via $\omega:\Gal(\CC/\QQ)\rightarrow \widehat{\ZZ}^{\times}$.
\end{lemma}

We prove this lemma in Section \ref{Proof-of-Main-Theorem}.

Compared to the representation of $\Gal(\CC/\QQ)$, we are more interested in the Galois actions of $\Gal(\overline{\QQ}/\QQ)$. For this, fix a field embedding $\overline{\QQ}\rightarrow \CC$.

Let $X,Y$ be smooth complete varieties defined over $\overline{\QQ}$ and $X_{\CC},Y_{\CC}$ be the corresponding complex varieties. By \cite{Artin&Mazur1969}*{Corollary 12.12}, there is a canonical homotopy equivalence $(X_{\CC})_{\et}\simeq X_{\et}$. The same holds for $Y$.

Assume that $Y_{\CC}$ is simply-connected. Let $f:X\rightarrow Y$ be an algebraic morphism over some field automorphism of $\overline{\QQ}$ such that $f_{\et}:X_{\et}\rightarrow Y_{\et}\simeq (Y_{\CC})_{\et}$ is a homotopy equivalence. Call such an element $f$ in  $\mathbf{S}(Y_{\CC})^\wedge_N$ an \textbf{$\overline{\QQ}$-algebraic element} in the profinite normal structure set $\mathbf{S}(Y_{\CC})^\wedge_N$.

\begin{definition}\label{definition-of-Q-bar-algebraic-elements}
The \textbf{Galois symmetry $\Gal(\overline{\QQ}/\QQ)$ on the $\overline{\QQ}$-algebraic elements in $\mathbf{S}(Y_{\CC})^\wedge_N$} is defined by
\[
\sigma:(X\xrightarrow{f} Y) \rightarrow ((X^{\sigma})\xrightarrow{\sigma} X\xrightarrow{f} Y)
\]
where $\sigma\in \Gal(\overline{\QQ}/\QQ)$. 
\end{definition}

Note that the restricting homomorphism $\omega:\Gal(\CC/\QQ)\rightarrow \widehat{\ZZ}^{\times}$ factors as $\Gal(\CC/\QQ)\rightarrow \Gal(\overline{\QQ}/\QQ)\xrightarrow{\omega'}\widehat{\ZZ}^{\times}$. The Kronecker-Weber theorem in the class field theory states that the homomorphism $\omega'$ is the abelianization of $\Gal(\overline{\QQ}/\QQ)$.

An algebraic map $f:X\rightarrow Y$ of $\overline{\QQ}$-varieties can be extended to an algebraic map $f_{\CC}:X_{\CC}\rightarrow Y_{\CC}$ of complex varieties. Indeed, since $f$ satisfies a commutative diagram for some $\tau\in \Gal(\overline{\QQ}/\QQ)$,
\[
\begin{tikzcd}
    X \arrow[r,"f"] \arrow[d] & Y \arrow[d] \\
    \Spec(\overline{\QQ}) \arrow[r,"\tau"] & \Spec(\overline{\QQ})
\end{tikzcd}
\]
one may extend $\tau$ to a field automorphism $\widetilde{\tau}\in \Gal(\CC/\QQ)$ and $f_{\CC}$ is defined over $\widetilde{\tau}$ by the universal property of pullback.

\begin{lemma}
The extended algebraic map $f_{\CC}$ in $\mathbf{S}(Y_{\CC})^\wedge_N$ is independent of the choices of the extensions $\widetilde{\tau}$ of $\tau$.
\end{lemma}

\begin{proof}
Consider the following commutative diagram.
\[
\begin{tikzcd}
    X_{\CC} \arrow[r,"f_{\CC}"] \arrow[d] & Y_{\CC} \arrow[d] \\
    X \arrow[r,"f"] & Y
\end{tikzcd}
\]
The algebraic element $(X_{\CC})_{\et}\xrightarrow{(f_{\CC})_{\et}}(Y_{\CC})_{\et}$ in $\mathbf{S}^{TOP}(Y_{\CC})^\wedge$ is the composition of maps $(X_{\CC})_{\et}\xrightarrow{\simeq} X_{\et} \xrightarrow{f} Y_{\et} \xleftarrow{\simeq}(Y_{\CC})_{\et}$. But the latter  map is independent of the choice of $\widetilde{\tau}$.
\end{proof}

Similarly, extend $\sigma\in \Gal(\overline{\QQ}/\QQ)$ to some $\widetilde{\sigma}\in\Gal(\CC/\QQ)$. The following commutative diagram proves that the algebraic element $\widetilde{\sigma}(f_{\CC})$ in $\mathbf{S}^{TOP}(Y_{\CC})^\wedge$ is independent of the choices of the liftings $\widetilde{\sigma}$ of $\sigma$.
\[
\begin{tikzcd}
    (X^{\sigma}_{\CC})_{\et} \arrow[r,"\widetilde{\sigma}"] \arrow[d,"\simeq"] & (X_{\CC})_{\et} \arrow[r,"f_{\CC}"] \arrow[d,"\simeq"] & (Y_{\CC})_{\et} \arrow[d,"\simeq"] \\
    (X^{\sigma})_{\et} \arrow[r,"\sigma"] & X_{\et} \arrow[r,"f_{\CC}"] & Y_{\et} 
\end{tikzcd}
\]

\begin{proposition}
There is an equivariant map from the subset of $\overline{\QQ}$-algebraic elements in $\mathbf{S}(Y_{\CC})^\wedge_N$ with $\Gal(\overline{\QQ}/\QQ)$-action to that of $\CC$-algebraic elements in $\mathbf{S}(Y_{\CC})^\wedge_N$ with $\Gal(\CC/\QQ)$-action, via the canonical quotient map $\Gal(\CC/\QQ)\rightarrow \Gal(\overline{\QQ}/\QQ)$.    
\end{proposition}

Then the following theorem is a corollary of  \ref{GaloisMainIII}.

\begin{theorem}\label{GaloisMain}
Let $Y$ be a smooth, complete, connected, simply-connected variety over $\overline{\QQ}$. Assume that $Y$ has dimension at least $3$. The Galois action $\Gal(\overline{\QQ}/\QQ)$ on the $\overline{\QQ}$-algebraic elements in $\mathbf{S}(Y_{\CC})^\wedge_N$ factors through the abelianization homomorphism $\omega':\Gal(\overline{\QQ}/\QQ)\rightarrow\widehat{\ZZ}^{\times}$. Moreover, this abelian action canonically extends to the entire $\mathbf{S}(Y_{\CC})^\wedge_N$.
\end{theorem}

This is Theorem \ref{Main-Introduction} in the Introduction.

\section{Proof of Lemma \ref{GaloisMainII}}\label{Proof-of-Main-Theorem}

The proof is based on the following lemma.

\begin{lemma}
Let $\sigma\in \Gal(\CC/\QQ)$. Let $X$ be a smooth complex variety and $TX$ the tangent bundle of $X$. Then the following diagram commutes up to homotopy, where $\omega:\Gal(\CC/\QQ)\rightarrow \widehat{\ZZ}^{\times}$ is the restriction to the roots of unity.
\[
\begin{tikzcd}
 \widehat{X} \arrow[r,"TX"] \arrow[d,"\sigma^{-1}"] & \widehat{BU} \arrow[d,"\psi^{\omega(\sigma)}"] \\
 \widehat{X^{\sigma}} \arrow[r,"TX^{\sigma}"] & \widehat{BU}
\end{tikzcd}
\]
\end{lemma}

This lemma can be easily proven by the \'etale realizations of the algebraic stack $[ */GL(n,\CC) ]$ (see \cite{Chough-Etale-Homotopy-Algebraic-Stacks}), the comparison theorem of the \'etale homotopy type with the profinite completion of the topological space $BGL(n,\CC)$ (\cite{Chough-Etale-Homotopy-Algebraic-Stacks}*{Theorem 4.3.24}) and the proof of the Adams conjecture in \cite{Sullivan1974}. We provide a more elementary proof with the idea of \cite{Deligne&Sullivan1975}*{Paragraph below the Proposition}.

\begin{proof}
Let $STX$ be the associated Stiefel bundle of $TX$ over $X$. That is, $STX$ is the space of all embeddings of vector spaces $T_xX\rightarrow \CC^N$, where $x$ ranges over all points of $X$ and $N$ is a sufficiently large number. The bundle $STX$ is a complex algebraic bundle over $X$. Each fiber is isomorphic to the Stiefel variety $V_{n,N}$ consisting of all $n$-frames in $\CC^N$.

There is a complex algebraic morphism $STX\rightarrow Gr_{n}(\CC^N)$, which takes an embedding $T_xX\rightarrow \CC^N$ to its image, where $n$ is the complex dimension of $X$.

Then we get a zig-zag of algebraic maps of varieties $X \leftarrow STX \rightarrow Gr_{n}(\CC^N)$. Notice that the left arrow is homotopically highly connected if $N$ is large enough. When $N$ tends to infinity, the zig-zag becomes the classifying map $X\rightarrow BGL(n,\CC)$ of the tangent bundle $TX_{\CC}$. This proves that $X_{\et}\simeq \widehat{X}\xrightarrow{TX} BGL(N,\CC)^{\wedge}\rightarrow BGL(\infty,\CC)^{\wedge}\simeq BU^{\wedge}$ is compatible with the $\Gal(\CC/\QQ)$ action. 

The rest of the proof follows from the proof to the Adams conjecture (see \cite[p.~69]{Sullivan1974}), which shows that the $\Gal(\CC/\QQ)$ action on $BGL(\infty,\CC)^{\wedge}\simeq BU^{\wedge}$ is the abelianized Galois action of $\widehat{\ZZ}^{\times}$ defined by the modified Adams operations.
\end{proof}

\begin{corollary}
With the same assumption as above, the following diagram also commutes up to homotopy
\[
\begin{tikzcd}
 \widehat{X} \arrow[r,"\nu_{X}"] \arrow[d,"\sigma^{-1}"] & \widehat{BU} \arrow[d,"\psi^{\omega(\sigma)}"] \\
 \widehat{X^{\sigma}} \arrow[r,"\nu_{X^{\sigma}}"] & \widehat{BU}
\end{tikzcd}
\]
where $\nu_{X}$ and $\nu_{X^{\sigma}}$ are the stable inverses of the tangent bundles $TX$ and $TX^{\sigma}$ respectively.
\end{corollary}

\begin{proof}[Proof of Lemma \ref{GaloisMainII}]

Let $X,Y$ be smooth, complete complex varieties. Assume that $Y$ is simply-connected. Let $f:X\rightarrow Y$ be an algebraic map of complex varieties representing a complex algebraic element in $\mathbf{S}(Y)^\wedge_N$, that is, $f_{\et}:X_{\et}\rightarrow Y_{\et}$ is a homotopy equivalence. Since $Y$ is simply-connected, $f_{\et}$ splits into a product of homotopy equivalences $(f_{\et})_p:(X_{\et})^{\wedge}_p\rightarrow (Y_{\et})^{\wedge}_p$.

Assume that $\omega(\sigma)= \prod_p\sigma_p\in\prod_p\widehat{\ZZ}_p^\times\simeq \widehat{\ZZ}^\times$, where $\sigma\in \Gal(\CC/\QQ)$.

Let $\nu_Y:Y\rightarrow BU$ be the stable normal bundle of $Y$. Let $\widehat{\nu_Y}_p$ be the $p$-completion of $\nu_Y$. Similarly we also have $\nu_X$ and $\widehat{\nu_X}_p$.

Recall from \cite[Theorem 6.5]{SullivanMITnotes} that $\widehat{BSTOP}_p$ is equivalent to the classifying space $B_{\widehat{KO}_p}SG^\wedge$ of profinite spherical fibrations with $\widehat{KO}_p$-orientations. The Adams conjecture states that the Galois action on $\widehat{BU}_p$ fixes the underlying $\widehat{BSG}_p$. The abelianized Galois action $\widehat{\ZZ}^{\times}$ on $\widehat{BU}_p$ extends to $\widehat{BSTOP}_p$ via the Galois action on the orientation $\widehat{KO}_p$ (\cite[Theorem 6.7]{SullivanMITnotes}). Moreover, the proof of the Adams conjecture in \cite[Theorem 6.7]{SullivanMITnotes} shows that $\widehat{BSTOP}_p\xrightarrow{\psi^{\sigma_p}-1}\widehat{BSTOP}_p$
canonically factors as a composition of maps $\widehat{BSTOP}_p\xrightarrow{g_{\sigma_p}} \widehat{G/TOP}_p\rightarrow \widehat{BSTOP}_p
$ for some $g_{\sigma_p}$. 

The following commutative diagram shows that the map $(X_{\et})^{\wedge}_p \rightarrow \widehat{BU}_p \xrightarrow{g_{\sigma_p}} \widehat{G/U}_p \rightarrow \widehat{G/TOP}_p$ corresponds to the ``surgery map'' $(X^{\sigma}_{\et})^{\wedge}_p\xrightarrow{\sigma} (X_{\et})^{\wedge}_p$. 
\begin{equation}\label{Key-Diagram}
\begin{tikzcd}
 (X_{\et})^{\wedge}_p \arrow[r,"\nu_{X}"] \arrow[d,"\sigma^{-1}"] & \widehat{BU}_p \arrow[r] \arrow[d,"\psi^{\sigma_p}"] & \widehat{BSO}_p \arrow[r] \arrow[d,"\psi^{\sigma_p}"] & \widehat{BSTOP}_p \arrow[d,"\psi^{\sigma_p}"] \\
 (X^{\sigma}_{\et})^{\wedge}_p \arrow[r,"\nu_{X^{\sigma}}"] & \widehat{BU}_p \arrow[r] & \widehat{BSO}_p \arrow[r] & \widehat{BSTOP}_p
\end{tikzcd}    
\end{equation}

\textbf{(1) Case I: $p$ is an odd prime.}

Recall $(\Delta_X)_p\in\widetilde{KO_p^\wedge}(\widehat{M(\nu_X)}_p)$ induced by $\nu_X$. Similar for $(\Delta_Y)_p$. Let $\phi_p\in \widetilde{KO_p^\wedge}(Y)$ correspond to $f:X\rightarrow Y$. Then $(f_p^{-1})^*(\Delta_X)_p=\phi_p\cdot (\Delta_Y)_p$.

By the diagram \ref{Key-Diagram}, the element $(\Delta_{X^{\sigma}})_p\in\widetilde{KO_p^\wedge}(\widehat{M(\nu_{X^{\sigma}})}_p)$ induced by stable normal bundle $\nu_{X^{\sigma}}$ of $X^{\sigma}$ is pulled back along $\sigma^{-1}$ to $\psi^{\sigma_p}(\Delta_X)_p$ over $X$. The element $\phi'_p\in \widetilde{KO_p^\wedge}(Y)$ representing $X^{\sigma}\xrightarrow{\sigma} X\xrightarrow{f} Y$ satisfies the following equation.
\[
\phi'_p\cdot (\Delta_Y)_p=(\sigma^{-1}\circ (f_p)^{-1})^*(\Delta_{X^{\sigma}})_p=(f_p^{-1})^*\psi^{\sigma_p}(\Delta_X)_p=\psi^{\sigma_p}(\phi_p\cdot(\Delta_Y)_p)
\]

It fits the $p$-part of the abelianized Galois action on $\mathbf{S}(Y)^\wedge_N$ (\ref{odd-prime-Galois-symmetry}).

\vspace{0.15in}

\textbf{(2) Case II: $p=2$.}

Let $l\in H^{4*}(X;\ZZ^{\wedge}_{2}),k\in H^{4*-2}(X;\ZZ/2)$ represent $f:X\rightarrow Y$. Let $(l',k')$ represent $f\circ \sigma:X^{\sigma}\rightarrow Y$.

Let $L_X\in H^{4*}(X;\widehat{\ZZ}_2)$ be the $2$-completion of the $2$-local Hirzebruch $L$-genus of $X$ (this is the $2$-completion of the $\ZZ_{(2)}$-coefficient $L$-class defined in \cite{Sullivan&Morgan}*{p.~530}). Then $(f_2^{-1})^*L_X=(1+8l)\cdot L_Y$ (see \cite{Sullivan&Morgan}*{Theorem 8.7}).

By the diagram \ref{Key-Diagram},
\[
(\sigma^{-1})^* L_{X^{\sigma}}=\psi^{\omega(\sigma)}_H L_X
\]
where $\psi^{\omega(\sigma)}$ is the cohomological Adams operation.

Then
\[
(1+8l')L_Y=((\widehat{f}_2\circ \sigma)^{-1})^*L_{X^{\sigma}}=(\widehat{f}_2^{-1})^*\psi^{\sigma_2}_H L_X=\psi^{\sigma_2}_H((\widehat{f}_2^{-1})^* L_X)=\psi^{\sigma_2}_H ((1+8l)\cdot L_Y)
\]
where $\psi^{\sigma_2}_H$ is the cohomological Adams operation on $H^{2*}(Y;\widehat{\ZZ}_2)$.
This is exactly the $2$-part abelianized Galois action $l$-class (\ref{2-Galois-symmetry-l-class}).

Let $k^{\sigma_2}_Y$ be the $k$-class for the map $Y^{\sigma} \xrightarrow{\sigma} Y$, which is given by $(Y_{\et})^{\wedge}_2 \rightarrow \widehat{BU}_2\xrightarrow{g_{\sigma_2}} \widehat{G/U}_2 \rightarrow \widehat{G/TOP}_2$.

Let $(l^{\sigma},k^{\sigma})$ represent $f^{\sigma}:X^{\sigma}\rightarrow Y^{\sigma}$. By the the following commutative diagram, $k^\sigma=\sigma^* k$.
\[
\begin{tikzcd}
 (X^{\sigma})_{\et} \arrow[d,"\sigma"] \arrow[r,"f^{\sigma}"] & (Y^{\sigma})_{\et} \arrow[d,"\sigma"] \arrow[r,"\nu_{Y^{\sigma}}"] & \widehat{BU}  \arrow[d,"\psi^{\omega(\sigma)^{-1}}"] \\
 X_{\et} \arrow[r,"f"] & Y_{\et} \arrow[r,"\nu_{Y}"] & \widehat{BU}
\end{tikzcd}
\]

Hence, $k'$ is also the $k$-class for the map $\sigma\circ f^{\sigma}:X^{\sigma}\rightarrow Y$. Then
\[
k'= k^{\sigma_2}_Y+ (\sigma^{-1})^* k^{\sigma}= k^{\sigma_2}_Y+k
\]
which agrees with the $2$-part abelianized Galois symmetry on the $k$-class (see \ref{2-Galois-symmetry-l-class}).
\end{proof}

\section{The characteristic class $k^{\sigma_2}$}\label{algorithm-for-k-class}

The Lemma \ref{Additivity-of-k-classes} can be deduced from \cite{Friedlander-etale-homotopy-type}*{Theorem 9.2}, where Friedlander proves that the Adams operation $\psi^{k}:BU^{\wedge}_{p}\rightarrow BG_{p}$ is an $H$-space morphism when $k$ is not divisible by $p$. We provide an alternative proof below.

\begin{proof}[Proof of \ref{Additivity-of-k-classes}]
Let $\omega':\Gal(\overline{\QQ}/\QQ)\rightarrow \widehat{\ZZ}^{\times}$ be the abelianization quotient map. Let $\alpha\in \Gal(\overline{\QQ}/\QQ)$ be the element so that $\omega'(\alpha)^{-1}=\sigma=\prod_p\sigma_p\in \widehat{\ZZ}^\times\cong \prod_p\widehat{\ZZ}^\times_p$ and $\sigma_p=1$ except for $p\neq 2$.

It suffices to consider the case when $\sigma_2$ is represented by an odd integer. It suffices that the induced map $f_\sigma:\widehat{BU}\rightarrow \widehat{G/U}$ induced by the Adams conjecture is an $H$-space map.

Recall that $\psi^{\sigma}: \widehat{BU}_2\rightarrow  \widehat{BU}_2$ is the stablization of the \'etale homotopy equivalence induced the algebraic isomorphism $\alpha: Gr_n(\CC^{N})\rightarrow Gr_n(\CC^{N})$ (see the proof of Adams conjecture in \cite[Chapter 5]{SullivanMITnotes}).
After passing $N$ to $\infty$, we have $\alpha: \widehat{BU(n)}\rightarrow \widehat{BU(n)}$.

Notice that the unstable Whitney sum $\Delta:\widehat{BU(n)}\times \widehat{BU(m)}\rightarrow \widehat{BU(n+m)}$ respects the Galois action, since it is induced from the algebraic map $Gr_n(\CC^{N})\times Gr_m(\CC^{M})r\rightarrow Gr_{n+m}(\CC^{N+M})$.

The proof of the Adams conjecture in \cite[p.~158]{SullivanMITnotes} is deduced from two facts (indeed, one needs to unravel the mathematical diagrams in terms of the inertia lemma \cite[p.~99]{SullivanMITnotes}). The first fact is that $BU(n-1)\rightarrow BU(n)$ is the universal spherical fibration of a rank $n$ vector bundle. The second is that  the following diagram commutes.
\[
\begin{tikzcd}
\widehat{BU(n-1)} \arrow[r,"\alpha"] \arrow[d] & \widehat{BU(n-1)} \arrow[d] \\
\widehat{BU(n)} \arrow[r,"\alpha"] & \widehat{BU(n)}
\end{tikzcd}
\]
Now consider the following commutative diagram
\[
\begin{tikzcd}
\widehat{BU(n+m-1)} \arrow[r,"\alpha"] \arrow[d] & \widehat{BU(n+m-1)} \arrow[d] \\
\widehat{BU(n+m)} \arrow[r,"\alpha"] & \widehat{BU(n+m)}
\end{tikzcd}
\]
It suffices that the pullback of this diagram along the H-space map $\Delta: \widehat{BU(n)}\times \widehat{BU(m)}\rightarrow \widehat{BU(n+m)}$ is equivalent, up to homotopy, to the following diagram, where $p_1,p_2$ are the projection of $\widehat{BU(n)}\times \widehat{BU(m)}$ onto the two factors and $*$ is the fiberwise join product.
\[
\begin{tikzcd}
p_1^*\widehat{BU(n-1)} * p_2^*\widehat{BU(m-1)} \arrow[r,"\alpha * \alpha"] \arrow[d] & p_1^*\widehat{BU(n-1)} * p_2^*\widehat{BU(m-1)}  \arrow[d] \\
\widehat{BU(n)}\times \widehat{BU(m)} \arrow[r,"\alpha"] & \widehat{BU(n)}\times \widehat{BU(m)}
\end{tikzcd} 
\]
It is left to check the commutativity of the following diagram.

\begin{equation}\label{*diagram}
\begin{tikzcd} 
p_1^*\widehat{BU(n-1)} * p_2^*\widehat{BU(m-1)} \arrow[r,"\alpha * \alpha"] \arrow[d] & p_1^*\widehat{BU(n-1)} * p_2^*\widehat{BU(m-1)}  \arrow[d] \\
\widehat{BU(n+m-1)} \arrow[r,"\alpha"] & \widehat{BU(n+m-1)}
\end{tikzcd}     
\end{equation}

Indeed, the map $p_1^*\widehat{BU(n-1)} * p_2^*\widehat{BU(m-1)}\rightarrow \widehat{BU(n+m-1)}$ is realized by a map $p_1^*BU(n-1) * p_2^*BU(m-1)\rightarrow BU(n+m-1)$ as follows. Each element of $BU(n-1)$ can be uniquely written as a pair of subspaces $V^{n-1}_1\subset V^n_2$ in $\CC^{\infty}$ and the map $BU(n-1)\rightarrow BU(n)$ takes $V^{n-1}_1\subset V^n_2$ to $V_2$. Now take an element $W^{m-1}_1\subset W^m_2$ in $BU(m-1)$. Let $V^{\bot}$ be the perpendicular $1$-dimensional complementary of $V^{n-1}_1\subset V^n_2$ and the same for $W^{\bot}$. There is a unit circle $\{(e^{i\phi},0)\in V^{\bot}\oplus W^{\bot}\}$ in $V^{\bot}$. Similarly $\{(0,e^{i\phi})\}$ in $W^{\bot}$. There is a family of $1$-dimensional subspaces $\{\CC_t\}_{t\in I}$ in $V^{\bot}\oplus W^{\bot}$ whose unit circles are $\{(te^{i\phi},\sqrt{1-t^2}e^{i\phi})\}$. Notice that $p_1^*BU(n-1) * p_2^*BU(m-1)$ (over $BU(n)\times BU(m)$) is a quotient of $BU(n-1) \times BU(m-1)\times I$. So the map $p_1^*BU(n-1) * p_2^*BU(m-1)\rightarrow BU(n+m-1)$ is induced by $BU(n-1) \times BU(m-1)\times I\rightarrow BU(n+m-1)$, which maps $(V^{n-1}_1\subset V^n_2,W^{m-1}_1\subset W^m_2)$ to $(V_1\oplus W_1\oplus \CC_t)\subset (V_2\oplus W_2)$.

Moreover, the map $BU(n-1) \times BU(m-1)\times I\rightarrow BU(n+m-1)$ is homotopic to the stablization of a map $f:Gr_{n-1}(\CC^{N})\times Gr_{m-1}(\CC^{M})\times I\rightarrow Gr_{m+n-1}(\CC^{N+M+2})$ with a similar construction like above. Notice that $\CC^{N+M+2}=\CC^2\oplus \CC^N\oplus \CC^M$. There are two unit circles in the axes of $\CC^2$, namely $\{(e^{i\phi},0)\}$ and $\{(0,e^{i\phi})\}$. Then there is a family of $1$-dimensional subspaces $\{\CC'_{t}\}_{t\in I}$ of $\CC^2$, whose unit circles are $\{(te^{i\phi},\sqrt{1-t^2}e^{i\phi})\}$. Given a subspace $V^{n-1}$ in $\CC^N$ and a subspace $W^{m-1}$ in $\CC^M$, $f_t(V,W)=\CC'_t\oplus V\oplus W$ in $\CC^{N+M+2}$. 

Indeed, the map $f:Gr_{n-1}(\CC^{N})\times Gr_{m-1}(\CC^{M})\times I\rightarrow Gr_{m+n-1}(\CC^{N+M+2})$ can be extended to a map $Gr_{n-1}(\CC^{N})\times Gr_{m-1}(\CC^{M})\times Gr_{1}(\CC^2)\rightarrow Gr_{m+n-1}(\CC^{N+M+2})$ induced by the direct sum of subspaces. Under the $Gr_{1}(\CC^2)\cong \CC P^1$, we embed $I$ as the half real line $[0,\infty]$ in $\CC P^1$.

Since  $Gr_{n-1}(\CC^{N})\times Gr_{m-1}(\CC^{M})\times \CC P^1 \rightarrow Gr_{m+n-1}(\CC^{N+M+2})$ is an algebraic map defined over $\ZZ$, we have the following commutative diagram
\[
\begin{tikzcd}
\widehat{BU(n-1)}\times \widehat{BU(m-1)}\times \widehat{\CC P^{1}} \arrow[r,"\alpha \times \alpha\times \alpha"] \arrow[d] & \widehat{BU(n-1)}\times \widehat{BU(m-1)}\times \widehat{\CC P^{1}}  \arrow[d] \\
\widehat{BU(n+m-1)} \arrow[r,"\alpha"] & \widehat{BU(n+m-1)}
\end{tikzcd}
\]
However, notice that the map $\alpha$ on $\widehat{\CC P^{1}}$ is in fact homotopic to the completion of the map $z\rightarrow z^{\sigma}$ on $\CC P^{1}$, since the homotopy classes of self homotopy equivalences of $\widehat{\CC P^{1}}$ is determined by the induced group homomorphism on $H^{2}(\widehat{\CC P^{1}};\widehat{\ZZ})$. 

But under the embedding $[0,\infty]\subset \CC P^1$, the restriction of the map $z\rightarrow z^{\sigma}$ to $[0,\infty]$ is homotopic to the identity map. Hence, we get the following commutative diagram 
\[
\begin{tikzcd}
\widehat{BU(n-1)}\times \widehat{BU(m-1)}\times I \arrow[r,"\alpha \times \alpha\times 1"] \arrow[d] & \widehat{BU(n-1)}\times \widehat{BU(m-1)}\times I  \arrow[d] \\
\widehat{BU(n+m-1)} \arrow[r,"\alpha"] & \widehat{BU(n+m-1)} 
\end{tikzcd}
\]
Passing to the quotient of the spaces in the upper horizontal arrow, it is exactly the diagram \ref{*diagram}.
\end{proof}

\begin{example}
We compute the class $k^{\sigma_2}$ of the stable normal bundle $\nu_{\CC P^N}$ of $\CC P^N$ with $N$ even. The additivity of $k^{\sigma_2}$ implies that $k^{\sigma_2}_{\CC P^N}=k^{\sigma_2}(T\CC P^N)=k^{\sigma_2}(\nu_{\CC P^N})$.

Let $\omega':\Gal(\overline{\QQ}/\QQ)\rightarrow \widehat{\ZZ}^\times$ be the abelianization quotient map. Let $\alpha\in \Gal(\overline{\QQ}/\QQ)$ be an element such that
$\omega'(\alpha)^{-1}=\sigma\prod_p\sigma_p\in\prod_p\widehat{\ZZ}^{\times}_p\cong\widehat{\ZZ}^\times$. It suffices to consider the case when $\sigma_2$ is represented by an integer and all other $\sigma_p=1$.

Any homotopy class of self \'etale homotopy equivalence of $\CC P^N$ is determined by a map on $H^{2}(\widehat{\CC P^N};\widehat{\ZZ})$. Hence, the Galois automorphism $\alpha$ on $\widehat{\CC P^N}$ is homotopic to the map $f_{\sigma_2}([x_0,\cdots,x_N])=[x^{\sigma_2}_0,\cdots, x^{\sigma_2}_N]$ (also see \cite[Corollary 5.4]{SullivanMITnotes}).

As in \cite[Theorem 9]{Sullivan1996}, the element in $\mathbf{S}^{TOP}(\CC P^N)$ is determined by the `splitting invariants' on the submanifolds $\CC P^n$ for $n=1,2,\cdots,N-1$. As a result, the associated Kervaire class $k^{\sigma_2}_{\CC P^N}$ is determined by the $2$-adic Kervaire invariant of $f_{\sigma_2}$ on $\CC P^n$ for $n$ odd and $n\geq 3$, namely, the Kervaire invariant is $\langle k^{\sigma_2}_{\CC P^N},\CC P^n\rangle$ (see \cite[p.~91, Proof of Theorem 4']{Sullivan1996}).

Since the transversal preimage of $\CC P^n$ can be made into a complete intersection of several degree $\sigma_2$ hypersurfaces, by Lefschetz's theorem we know that $H_i(f_{\sigma_2}^{-1}(\CC P^n);\ZZ/2)\rightarrow H_i(\CC P^n;\ZZ/2)$ is an isomorphism for $i\neq n$. \cite{Wood1975}\cite{Wood1979}\cite{Browder1978} show that the Kervaire invariant of a complete intersection $V^k$ in a complex projective space obstructs to finding a symplectic basis $\alpha_i$ for $H_k(V;\ZZ/2)$ so that $V$ is the connected sum of a manifold with the same homology like $\CC P^k$ and several $S^k\times S^k$ indexed by $\alpha_i$. So their Kervaire invariant of $f_{\sigma_2}^{-1}(\CC P^n)$ is exactly the Kervaire invariant for the map $f_{\sigma_2}$, namely, the obstruction to finding some surgery process on $f_{\sigma_2}^{-1}(\CC P^n)$ such that its $\ZZ/2$-homology is isomorphic to that of $\CC P^n$.

When $n\neq 1,3,7$, the Kervaire invariant of $f_{\sigma_2}^{-1}(\CC P^n)$ is the modified Legendre symbol valued in $\ZZ/2$, i.e., 
\[
\langle k^{\sigma_2}_{\CC P^N},[\CC P^n]\rangle=\legendre{2}{\sigma_2} =
    \begin{cases}
        0 & \text{if } \sigma_2  \equiv \pm 1 \pmod{8} \\
        1 & \text{if } \sigma_2  \equiv \pm 3 \pmod{8} 
    \end{cases}
\]
Hence, if $\omega\in H^2(\CC P^N;\ZZ/2)$ is the generator, then the $n$-th component of $k^{\sigma_2}_{\CC P^N}$ is $\legendre{2}{\sigma_2}\omega^n$.

When $n=1,3,7$, the Kervaire invariant vanishes.

For $n=1$, we need to use an alternative definition for the Kervaire invariant. One can homotope the map $f_{\sigma_2}$ such that $f^{-1}_{\sigma_2}(\CC P^1-\pt)=f^{-1}_{\sigma_2}(\CC P^1)-\pt$. Let $\nu$ be the normal bundle of $\CC P^1$ in $\CC P^N$. Choose a framing on $\nu\vert_{\CC P^1 -\pt}$, namely, a map $\CC P^1 -\pt\rightarrow SO(2N-2)$. It induces a framing on $f^{-1}_{\sigma_2}(\CC P^1)-\pt$, namely, $f^{-1}_{\sigma_2}(\CC P^1)-\pt\rightarrow \CC P^1 -\pt\rightarrow SO(2N-2)$. We need to check whether the framed manifold $f^{-1}_{\sigma_2}(\CC P^1)-\pt$ is zero or not in the almost framed bordism group $P_2$. Notice that $\nu$ has a complex structure, so we can choose a framing which factors through $SU(N-1)$, i.e., $\CC P^1 -\pt\rightarrow SU(N-1)\rightarrow SO(2N-2)$. So the framing on $f^{-1}_{\sigma_2}(\CC P^1)$ factors through a `$SU(N-1)$-framing'. However, $\pi_1(SU(N-1))=0$, so the framing on $f^{-1}_{\sigma_2}(\CC P^1)$ has no twisting. That is, the Kervaire invariant on $f^{-1}_{\sigma_2}(\CC P^1)$ is $0$. So $\langle k^{\sigma_2}_{\CC P^N},[\CC P^1]\rangle=0$.
\qed
\end{example}

Let $\gamma$ be the universal complex line bundle on $\CC P^{2N}$. Notice that the normal bundle $\nu_{\CC P^{2N}}$ is isomorphic to $(2N+1)\gamma^*$, where $\gamma^*$ is the complex dual bundle of $\gamma$. The additivity of $k^{\sigma_2}$ implies that $k^{\sigma_2}_{\CC P^{2N}}=k^{\sigma_2}(\gamma)$. In particular, $k^{\sigma_2}(\gamma)$ is irrelavant to $N$. So we may let $N$ by the infinity.

Let $x_1,x_2,\cdots$ (of degree $2$) be the roots of the Stiefel-Whitney classes induced from $BU(1)\times BU(1)\times \cdots\rightarrow BU$. Again, by the additivity of $k^{\sigma_2}$ class, we can write 
\[
k^{\sigma_2}=k^{\sigma_2}_1(x_1+x_2+\cdots)+k^{\sigma_2}_3(x_1^3+x_2^3+\dots)+\dots
\]
where each $k^{\sigma_2}_i\in \ZZ/2$ can be calculated by the previous example. So we have proved the following:

\begin{proposition}
\[
 k^{\sigma_2}_{2i+1} =
    \begin{cases}
        \legendre{2}{\sigma_2} & \text{if } 2i+1\neq 1,3,7 \\
        0 & \text{if } 2i+1=1,3,7
    \end{cases}
\]   
\end{proposition}

\bibliographystyle{amsalpha}
\bibliography{ref}

\Addresses

\end{document}